  \def\<{{\langle}} 
  \def\>{{\rangle}}
  \def\eps{\varepsilon}
  \def\note#1{{}} 
 \def\can{{\rm \sf{can}}} 
 \def\graph#1{{\rm \sf{graph}}(#1)} 
  \def\note#1{}
  \def\cE{{\mathfrak E}}
   \def\cB{{\mathfrak B}}
  \def\rhom#1#2#3{{{\rm Hom}\sb{#1}(#2,#3)}}
  \def\beq{\begin{equation}} 
  \def\eeq{\end{equation}}
  \def\Desc{{\bf Desc}}
  \def\id{\mathrm{id}}
  \def\ot{{\otimes}}
    \def\vect{\mathfrak{Vect}}
     \def\1{\mathbf{1}}
     \def\coten#1{\Box_{#1}} 
\def\Set{\mathfrak{Set}}
\def\Com#1{\mathfrak{Comod}({#1})}
\def\Mod#1{\mathfrak{Mod}({#1})}
\def\Bicom#1{\mathfrak{Bicom}({#1})}
\def\Pr#1{\mathfrak{Pr}({#1})}
\def\k{\Bbbk}
  \def\Desc{\mathfrak{Desc}}
  \newcounter{zlist} 
  \newenvironment{zlist}{\begin{list}{(\arabic{zlist})}{ 
  \usecounter{zlist}\leftmargin2.5em\labelwidth2em\labelsep0.5em 
  \topsep0.6ex
  \parsep0.3ex plus0.2ex minus0.1ex}}{\end{list}}
  \newcounter{blist} 
  \newenvironment{blist}{\begin{list}{(\alph{blist})}{ 
  \usecounter{blist}\leftmargin2.5em\labelwidth2em\labelsep0.5em 
  \topsep0.6ex 
  \parsep0.3ex plus0.2ex minus0.1ex}}{\end{list}} 
  \newcounter{rlist}
   \newcounter{alist} 
  \newenvironment{alist}{\begin{list}{(\Alph{alist})}{ 
  \usecounter{alist}\leftmargin2.5em\labelwidth2em\labelsep0.5em 
  \topsep0.6ex 
  \parsep0.3ex plus0.2ex minus0.1ex}}{\end{list}}
\def\stac#1{\raise-.2cm\hbox{$\stackrel{\displaystyle\otimes}{\scriptscriptstyle{#1}}$}}
\def\cten#1{\raise-.2cm\hbox{$\stackrel{\displaystyle\widehat{\otimes}}
{\scriptscriptstyle{#1}}$}}
  \def\Label#1{\label{#1}\ifmmode\llap{[#1] }\else 
  \marginpar{\smash{\hbox{\tiny [#1]}}}\fi} 
  \def\Label{\label}
  \newtheorem{proposition}{Proposition}[section]
  \newtheorem{lemma}[proposition]{Lemma}
   \newtheorem{lefinition}[proposition]{Lemma \& Definition}
  \newtheorem{ident}[proposition]{Identification}
  \theoremstyle{definition} 
  \newtheorem{definition}[proposition]{Definition}
  \theoremstyle{remark}
  \theoremstyle{definition} 
\begin{document} 

 \title{On synthetic interpretation of quantum principal bundles} 
 \author{Tomasz Brzezi\'nski}
 \address{ Department of Mathematics, Swansea University, 
  Singleton Park, 
 \newline\indent  
  Swansea SA2 8PP, U.K.} 
  \email{T.Brzezinski@swansea.ac.uk}   
    \date{November 2009} 
 \subjclass[2010]{16T05; 58B34; 16T15} 
  \begin{abstract} 
Quantum principal bundles or principal comodule algebras are re-interpreted as principal bundles within a framework of Synthetic Noncommutative  Differential Geometry. More specifically, the notion of a noncommutative principal bundle within a braided monoidal category is introduced and it is shown that a noncommutative principal bundle in the category opposite to the category of vector spaces is the same as a faithfully flat Hopf-Galois extension. 
  \end{abstract} 
  \maketitle

\section{Introduction}
The idea of Synthetic Differential Geometry \cite{Koc:syn} which originates from synthetic considerations of Sophus Lie is very simple: All geometric constructions are performed within a suitable base category in which space forms are objects. For classical geometry the base category is cartesian closed, i.e.\ it has finite products (e.g.\ the cartesian product in the category of sets) and the exponentials (that is the collection of functions between space forms is such a form). Furthermore it often comes equipped with a method of constructing and classifying sub-objects (the sub-object classifier); in a word a base category is a {\em topos}. In this article we use a synthetic method in description or re-interpretation of principal bundles in Noncommutative Differential Geometry.

This article is addressed to noncommutative geometers and Hopf algebraists who are familiar with general category theory culture but are not experts in category theory. The aim is to explain categorical ingredients that enter a synthetic definition of a principal bundle and then to show that noncommutative generalisation of this definition yields in particular principal comodule algebras or faithfully flat Hopf-Galois extensions. The full geometric potential of Hopf-Galois extensions was probably first explored in \cite{BrzMaj:gau}, but only now solid topological and geometric evidence is being gathered that affirms that principal comodule algebras should be accepted as principal bundles in noncommutative geometry \cite{BauHaj:pet}. Thus this article might be intrepeted as making a synthetic case for this claim.  We also hope that this article will contribute to greater appreciation of the power of categorical (and synthetic) thinking in Noncommutative Geometry. 

We assume that the reader is familiar with basic concepts and terminology of category theory such as a monomorphism, epimorphism, functor, natural transformation, adjoint functors, unit and counit of adjunction and equivalence of categories. Some familiarity with finite limits, in particular products, pullbacks and equalisers is also needed, although these notions will be explained in due course. \cite{Mac:cat} is the standard reference for all categorical concepts not explained in this article. Details of Hopf algebra theory in braided monoidal categories that feature prominently in Section~\ref{sec.quant} may be found in \cite[Chapter~9]{Maj:fou}. 

\section{Explaining the definition of principal bundles}\label{sec.clas}
\setcounter{equation}{0}
In a synthetic formulation the definition of a principal bundle is very succinct; see \cite{Koc:fib}, \cite{Koc:alg}. Let $\cE$ be a category with finite limits (for example the category $\Set$ of sets or the category of smooth manifolds). This means that $\cE$ has a terminal object, binary products and equalisers; see \cite[Section~V.2, ~ Corollary~1]{Mac:cat}. We briefly explain these three terms presently.
A {\em terminal object} in $\cE$ is an object that is a codomain for exactly one arrow from any object in $\cE$ (for example, any singleton set is a terminal object in $\Set$). A {\em (binary) product} of objects  $E_1$, $E_2$ is an object $E_1\times E_2$ together with two arrows $p_1: E_1\times E_2\to E_1$ and $p_2: E_1\times E_2\to E_2$, known as {\em projections}, such that, for any morphisms $q_1: D\to E_1$, $q_2:D\to E_2$, there exists a {\bf unique} morphism $f: D\to E_1\times E_2$, such that $p_1\circ f = q_1$ and $p_2\circ f =q_2$ (for example the cartesian product of sets is a product in $\Set$).   An {\em equaliser} of morphisms  $q_1, q_2: D\to E$ is an object $C$ together with a morphism $\iota : C\to D$ that {\em equalises} $q_1$ and $q_2$, that is $q_1\circ \iota = q_2\circ \iota$, and has the following {\em universal property}. For any morphism $p: B\to D$ that equalises $q_1$ and $q_2$, there exists a {\bf unique} arrow $q: B\to C$ such that $p = \iota\circ q$. 

Let $G$ a group object in $\cE$ (i.e. an object with associative multiplication $G\times G\to G$, neutral (global) element $* \to G$, where $*$ is  a terminal object in $\cE$, and with the inverses given by an isomorphism $G\to G$). In case of category of sets $G$ is simply a group, in the category of smooth manifolds it is a Lie group etc. Let $B$  be an object in $\cE$. A principal $G$-bundle over $B$ is an object $P$ together with a morphism $\pi: P\to B$ and a (right) $G$-action $\varrho : P\times G \to P$ such that the the following diagram
\begin{equation}\label{diag1}
\xymatrix{ P\times G
\ar@<0.5ex>[r]^-{p}\ar@<-0.5ex>[r]_-{\varrho} & 
P \ar[r]^\pi &B ,}
\end{equation}
where $p$ is the projection, is exact. The term {\em exact} means that
\begin{blist}
\item \eqref{diag1} is a coequaliser diagram,
\item \eqref{diag1} is a kernel pair diagram,
\item $\pi$ is an effective descent epimorphism,
\end{blist}
and our aim in this section is to explain the meaning of (a)--(c).

The meaning of (a) is most straightforward as the definition of a coequaliser is obtained by reversing of all the arrows in already recalled definition of an equaliser. Explicitly, \eqref{diag1} is a coequaliser diagram provided $\pi$ {\em coequalises} $p$ and $\varrho$, i.e.,
$
\pi\circ p = \pi\circ \varrho,
$
and for any other morphism $\phi: P\to A$ coequalising $p$ and $\varrho$ there exists a unique morphism $\psi: B\to A$ fitting the following diagram
$$
\xymatrix{ P\times G
\ar@<0.5ex>[r]^-{p}\ar@<-0.5ex>[r]_-{\varrho} & 
P \ar[r]^\pi\ar[d]_\phi &B\ar[ld]^\psi \\
& A.&}
$$
The existence of such a unique $\psi$ is referred to as the {\em universal property of coequalisers}.

Before moving to condition (b), we can look at the meaning of (a) in case $\cE$ is the category $\Set$ of sets. In this case $P\times G$ is the usual cartesian product, $p$, $\varrho$, $\pi$ are functions and writing $x\cdot g$ for $\varrho(x,g)$ (the action of $g\in G$ on $x\in P$), the coequalising property of $\pi$ means that, for all $b\in G, x\in P$,
$$
\pi(x\cdot g) = \pi\left(p\left(x,g\right)\right) = \pi(x).
$$
The universal property now means that (up to an isomorphism) $B = P/G$ (the quotient set) and, by the coequalising property, $\pi: P\to B$ is the canonical surjection. 

Since $\cE$ has finite limits, in particular it has pullbacks. That is, for any pair of morphisms with common codomain, $f_1: E_1 \to B$, $f_2: E_2 \to B$, there is a 
diagram (i.e.\ an object $E_1\times_B E_2$ and two morphisms $p_1$, $p_2$ fitting the following diagram)
$$
\xymatrix{ E_1 \times_B E_2 \ar[r]^-{p_1} \ar[d]_{p_2} & E_1 \ar[d]^{f_1} \\
E_2 \ar[r]_{f_2} & B,}
$$ 
with the following universal property. For any object $F$ and morphisms $q_1: F\to E_1$ and $q_2: F\to E_2$ such that 
$
f_2 \circ q_2 = f_1 \circ q_1$, there exists a unique morphism $\gamma : F\to E_1 \times_B E_2$ such that $p_2\circ \gamma = q_2$ and $p_1\circ \gamma = q_1$. One should keep in mind that a pullback (as every finite limit) is a particular equaliser; see \cite[Section~V.2, ~ Theorem~2]{Mac:cat}.

The statement (b) that  \eqref{diag1} is a kernel pair diagram means that the (unique) map $\gamma: P\times G \to P\times_B P$,
$$
\xymatrix{ P\times G \ar@{-->}[rd]^-\gamma \ar@/^/[drr]^-p\ar@/_/[ddr]_-\varrho && \\ & P \times_B P \ar[r]^-{p_1} \ar[d]_{p_2} & P \ar[d]^\pi \\
& P \ar[r]_\pi & B,}
$$ 
is an isomorphism. 

Again we can look at the case of sets. The pullback $P\times_B P$ of $\pi$ is a subset of $P\times P$,
$$
P\times_B P = \{ (x,y) \in P\times P\; |\; \pi(x)=\pi(y)\},
$$
while $p_1, p_2$ are restrictions of canonical projections $P\times P \to P$. The map $\gamma$ is therefore given by
$$
\gamma(x,g) = (x, x\cdot g),
$$
and its bijectivity means precisely that the action of $G$ on $P$ is free. 

The explanation of the condition (c) requires a quick tour of the descent theory. We follow formulation of effective descent morphisms presented in \cite{JanSob:bey}. With any object $A$ of $\cE$ one can associate its {\em comma category} or {\em category of objects over $A$},  $(\cE \downarrow A)$. Objects in $(\cE \downarrow A)$ are morphisms with codomain $A$, i.e.\ pairs $(E,f)$, where $E$ is an object in $\cE$ and $f:E\to A$ is a morphism. A morphism from $(E_1,f_1)$ to $(E_2,f_2)$ in the comma category $(\cE \downarrow A)$ is a morphism $h: E_1\to E_2$ in $\cE$ making the following triangle
\begin{equation}\label{triang}
\xymatrix{E_1 \ar[rr]^-h\ar[rd]_{f_1} && E_2\ar[ld]^{f_2} \\ & A &}
\end{equation}
commute. While objects in $\cE$ have no elements in a set theoretic sense, one can treat all morphisms with codomain $A$ as {\em generalised elements} of the object $A$. Thus  $(\cE \downarrow A)$ can be thought of as a collection of elements of $A$; see \cite[Part~II]{Koc:syn} for an in-depth discussion of generalised elements in relation to comma categories.

Any morphism $\pi: P\to B$ in $\cE$ induces a pair of functors between the comma categories $(\cE \downarrow P)$ and $(\cE \downarrow B)$. The functor  $\pi_!: (\cE \downarrow P)\to (\cE \downarrow B)$ is given by the composition with $\pi$, that is
$\pi_!(E,f) = (E, \pi\circ f)$ on objects and $\pi_!(h) =h$ on morphisms in $(\cE \downarrow P)$. 
The functor $\pi^*: (\cE \downarrow B)\to (\cE \downarrow P)$, to every morphism $f: E\to B$ in $\cE$ assigns the projection $p_2$ in the pullback
\begin{equation}\label{pullback}
\xymatrix{ E \times_B P \ar[r]^-{p_1} \ar[d]_{p_2} & E \ar[d]^f & \\
P \ar[r]^\pi & B, & \qquad \pi^*(E,f) = (E\times_B P, p_2).}
\end{equation}
For all morphisms $h: (E_1, f_1) \to (E_2,f_2)$ in $(\cE \downarrow B)$, $\pi^*(h)$ is a unique filler in the following pullback diagram
$$
\xymatrix{ E_1\times_B P \ar@{-->}[rd]^{\!\!\!\!\pi^*(h)} \ar@/^/[drr]^-{h\circ p_1}\ar@/_/[ddr] && \\ & E_2 \times_B P \ar[r] \ar[d] & E_2 \ar[d]^{f_2} \\
& P \ar[r]_\pi & B,}
$$ 
where $p_1$ and unmarked arrows are relevant canonical projections. 

The functor $\pi_!$ is the left adjoint of the functor $\pi^*$. The counit of this adjunction, $\epsilon: \pi_!\circ \pi^* \to \id_{(\cE \downarrow B)}$, assigns to a morphism $f:E\to B$ the projection $p_1$ in the pullback \eqref{pullback}, which is clearly a morphism in $(\cE \downarrow B)$ between 
$$
\pi_!\circ \pi^*(E,f) = \pi_! (E\times_B P, p_2) = (E\times_B P, \pi\circ p_2)
$$
and $(E,f)$. The unit $\eta: \id_{(\cE \downarrow P)} \to \pi^*\circ \pi_!$ assigns to $f: E\to P$ the unique morphism $\langle E,f\rangle$ that fits into the diagram
$$
\xymatrix{ E \ar@{-->}[rd]^{\!\!\langle E,f\rangle} \ar@/^/[drrr]^-{\id_E}\ar@/_/[ddr]_-{f} &&& \\ & E \times_B P \ar[rr]^-{p_1} \ar[d]_{p_2} && E \ar[d]^{f\circ\pi} \\
& P \ar[rr]_\pi && B.}
$$ 

A {\em monad} or a {\em triple} on a category $\cE$ is a functor $T: \cE\to\cE$ together with two natural transformations $\mu: T\circ T \to T$ (the multiplication) and $\eta: \id_\cE\to T$ (the unit) which satisfy the associativity and unitality conditions. Explicitly, recall that to each object $E$ in $\cE$, a natural transformation $\mu$ assigns  a morphism $\mu_E : T\circ T(E) \to T(E)$ and $\eta$ assigns  a morphism $\eta_E: E\to T(E)$. The associativity and unitality conditions state that, for all objects $E$,
$$
\mu_E \circ \mu_{T(E)} = \mu_E\circ T(\mu_E), \quad \mu_E\circ \eta_{T(E)} = \mu_E\circ T(\eta_{E}) = \id_{T(E)}.
$$
To a monad $(T,\mu,\eta)$ one associates its representation category known as the Eilenberg-Moore category of {\em algebras} or {\em modules} $\cE^T$. The objects are pairs $(X,\xi)$, where $X$ is an object in $\cE$ and $\xi: T(X)\to X$ is a morphism such that
$$
\xi\circ \mu_X = \xi\circ T(\xi), \qquad \xi\circ \eta_X = \id_X,
$$
(i.e.\ $\xi$ is associative and unital). One refers to $\xi$ as a {\em structure map} for the module $X$. A morphism of modules is a morphism in $\cE$ that commutes with the structure maps.

We have all the ingredients needed for explaining condition (c) at hand. Going back to the morphism $\pi: P\to B$, the adjunction $\pi_! \dashv \pi^*$ induces a monad $T = \pi^*\circ \pi_!$ on $(\cE \downarrow P)$ with $\mu_{(E,f)} = \pi^*(\epsilon_{\pi_!(E,f)}) = \pi^*(\epsilon_{(E,\pi\circ f)})$, for all $(E,f) \in (\cE \downarrow P)$, and unit $\eta$. Furthermore, it induces the functor $K: (\cE \downarrow B)\to (\cE \downarrow P)^T$, known as the {\em comparison functor}, that fits the following diagram
$$
\xymatrix{ (\cE \downarrow B)\ar[rr]^{K}\ar[dr]_{\pi^*} & & (\cE \downarrow P)^T\ar[dl]^{U} \\
& (\cE \downarrow P)& ,}
$$
where $U$ is the forgetful functor. The commutativity of the diagram means that the object part of $K(E,f)$ is the projection $p_2: E \times_B P \to P$ in the pullback diagram \eqref{pullback}, i.e.\ $\pi^*(E,f)$. The structure map is 
$$
\pi^*(\epsilon_{(E ,f)}) : T\circ \pi^*(E,f) = \pi^*\circ \pi_!\circ \pi^*(E,f) \longrightarrow \pi^*(E,f) = (E \times_B P, p_2).
$$
The morphism $\pi: P\to B$ is an {\em effective descent morphism} provided the associated comparison functor $K$ is an equivalence of categories. Of course effective descent epimorphism is an effective descent morphism which is an epimorphism. 

In case of the category of sets, every epimorphism (surjective function) is an effective descent morphism, but in case of other categories condition (c) carries non-trivial information.

\section{Translating the definition of principal bundles}\label{sec.quant} \setcounter{equation}{0}
In this section we would like to translate carefully the definition of principal bundles to the case of braided monoidal categories with equalisers preserved by the tensor product. The ongoing programme whose foundations are presented in \cite{Mas:non} makes a very solid case for such categories to be a proper environment for Noncommutative Geometry. 

A category $\cB$ is called
a {\em  monoidal category}  if there exist
 a functor $-\ot-   :\cB \times\cB \to \cB $, a distinguished
 object $\1$ in $\cB$ and isomorphisms
 $$\alpha_{X,Y,Z} : (X\ot Y)\ot Z \to X\ot (Y\ot Z), \quad  
  \lambda_X: \1\ot X \to X,\quad \varrho_X:X\ot \1\to X,$$ 
natural in $X,Y,Z$, such that, for all objects $W,X,Y,Z$ in $\cB $, the following diagrams
$$\xymatrix{
((W\ot X)\ot  Y)\ot  Z\ar[rr]^-{\alpha_{W,X,Y}\ot  \id_Z}\ar[d]_{\alpha_{W\ot 
X,Y,Z}} & &  (W\ot  (X\ot  Y))\ot  Z \ar[rr]^-{\alpha_{W, X\ot Y,Z}} && W\ot  ((X\ot  Y)\ot  Z)\ar[d]^{\quad \id_W\ot \alpha_{X,Y,Z}} \\ 
    (W\ot  X)\ot  (Y\ot  Z)  \ar[rrrr]^-{\alpha_{W,X,Y\ot Z}}  
 & & & & W\ot  (X\ot  (Y\ot Z))\, ,  }$$ 
and 
$$\xymatrix{
(X\ot \1)\ot Y\ar[dr]_{\varrho_{X}\ot \id_Y}\ar[rr]^{\alpha_{X,\1,Y}} & & X\ot (\1\ot Y)
     \ar[dl]^{\id_X\ot\lambda_Y} &\\ 
 & X\ot Y &  }
 $$ 
 commute.  
 The functor $\ot$ is referred to as the {\em tensor} or {\em mnoidal product} and $\1$ is called the {\em monoidal unit}. A monoidal category is said to be {\em strict monoidal} if the isomorphisms $\alpha_{X,Y,Z}$, $\lambda_X$ and $\gamma_X$ are identities, for all objects $X,Y,Z$ of $\cB$. Although there might be rather few strict monoidal categories in nature, any monoidal category is equivalent to a strict one. It is therefore customary (and very convenient) to treat monoidal category as if it were strict monoidal, and not to write the rearranging brackets isomorphisms (or {\em associators}) $\alpha_{X,Y,Z}$ and the {\em unitors} $\lambda_X$ and $\varrho_X$ in any formulae (each of  these Greek letters will denote something else in the sequel). With this convention in mind, a monoidal category is denoted by $(\cB, \otimes, \1, \tau)$, and the brackets are not written between multiple tensor products.
 
 A (strict) monoidal category  $(\cB, \otimes, \1, \tau)$ is said to be a {\em (strict) braided monoidal category} if, for any pair of objects $X,Y$ of $\cB$, there exists an isomorphism, called a {\em braiding}, $\tau_{X,Y}: X\ot Y \to Y\ot X$, natural  in $X$ and $Y$, such that, for all objects $X,Y,Z$ of $\cB$, $\tau_{\1,X}$, $\tau_{X,\1}$ are identity morphisms (up to unitors), and 
 $$
 (\id_Y\ot \tau_{X,Z})\circ (\tau_{X,Y}\ot \id_Z) = \tau_{X, Y\ot Z},  \qquad (\tau_{X,Z}\ot\id_Y)\circ(\id_X\ot \tau_{Y,Z}) = \tau_{X\ot Y, Z},
 $$
 (the associators in the first equality and their inverses in the second one are not written explicitly). The naturality of braiding means that, for all morphisms $f: X\to Y$ and all objects $Z$,
 $$
 \tau_{Y,Z}\circ(f\ot\id_Z) = (\id_Z\ot f)\circ\tau_{X,Z}, \qquad \tau_{Z,Y}\circ(\id_Z\ot f) = (f\ot \id_Z)\circ\tau_{Z,X}.
 $$
A braided monoidal category 
is denoted by $(\cB ,\ot, \1,\tau)$. 
The examples of particular interest are the category $\vect_\k$ of  vector spaces over a field $\k$, with the monoidal product given by the standard tensor product of $\k$-vector spaces, monoidal unit $\1 =\k$ and with the braiding $\tau$ provided by the flip, and its opposite category $\vect_\k^{op}$. (Recall that the opposite category is obtained from the original one by reversing all the arrows.) 

As in the category of vector spaces in any monoidal category one can consider {\em monoids} or {\em algebras} and {\em comonoids} or {\em coalgebras}. These are defined in the same diagrammatic way as usual $\k$-algebras and  $\k$-coalgebras. In braided monoidal category, the tensor product of two coalgebras (algebras) is again a coalgebra (algebra). If $C$ is a coalgebra in $(\cB ,\ot, \1,\tau)$ with comultiplication $\Delta_C:C\to C\ot C$ and counit $\eps_C: C\to \1$, and $D$ is a coalgebra with comultiplication $\Delta_D:D\to D\ot D$ and counit $\eps_D: D\to \1$, then $C\ot D$ is a coalgebra with counit $\eps_C\ot \eps_D$ and comultiplication
\begin{equation}\label{eq.coalg}
\xymatrix{C\ot D \ar[rr]^-{\Delta_C\ot \Delta_D} && C\ot C\ot D\ot D \ar[rr]^-{\id_C\ot \tau_{C,D} \ot \id_D} &&  C\ot D\ot C\ot D.}
\end{equation}
Similarly, if  $A$ is an algebra with multiplication $m_A$ and unit $1_A: \1 \to A$, and $B$ is an algebra with multiplication $m_B$ and unit $1_B: \1\to A$, then $A\ot B$ is an algebra with unit $1_A\ot 1_B$ and multiplication
\begin{equation}\label{eq.alg}
\xymatrix{A\ot B\ot A\ot B \ar[rr]^-{\id_A\ot \tau_{B,A} \ot \id_B} && A\ot A\ot B\ot B \ar[rr]^-{m_A\ot m_B} && A\ot B.}
\end{equation}
Thus, similarly as for $\k$-algebras and $\k$-coalgebras, in a braided monoidal category one can combine algebras and coalgebras to define bialgebras and Hopf algebras.  

The aim of this section is to translate the definition of principal bundles to the case of braided monoidal categories. The translation procedure we employ follows the same principle as in \cite{Agu:int}: we interpret the category of sets as a monoidal category (with the cartesian product understood as the tensor product, the monoidal unit given by the singleton set, and the braiding provided by the flip) and  reformulate notions described in the preceding section in the way suitable for a general monoidal category. As with every translation, some information is lost some other gained, but it is hoped that the general spirit is preserved. 

The first observation is that every set $X$ is a {\em comonoid} or a {\em coalgebra} with the unique comultiplication given by the diagonal function,
$$
\Delta_X : X\to X\times X, \qquad x\mapsto (x,x).
$$
The coassociativity of this comultiplication is obvious. The counit is the unique map $\eps_X : X \to *$, where $*$ is a fixed singleton set understood as the monoidal unit in the category of sets. The standard identification $X\times * \cong X \cong *\times X$ immediately yields the counitality of $\Delta_X$. Guided by this we should consider comonoids or coalgebras  in $(\cB, \otimes, \1, \tau)$ as basic objects. 

Since any set has unique comultiplication and counit, a group $G$ is the same as a Hopf algebra in the category of sets; the standard compatibility conditions between comultiplication and multiplication etc., are automatically satisfied (the singleton set $*$ is understood as a monoid in the only possible way). The antipode $S$ is given by the inverse function
$$
S : G \to G , \qquad g\mapsto g^{-1}.
$$
This antipode is obviously bijective (in fact, involutive, since $S\circ S =\id_G$). Thus for a group object in a braided category $\cB$ we take a Hopf algebra $H$ in $\cB$ (and we may assume that $H$ has a bijective antipode, although it is not needed at this point). 

Again, since the comonoid structure on a set is uniquely given, the action of a group $G$ on a set $P$ is compatible with the comonoid structures. This means that the action $\varrho : P\times G\to P$ makes $P$ into a right module coalgebra over the Hopf algebra $G$ (all in the category of sets), i.e., for all $x\in P$ and $g\in G$,
\begin{equation}\label{mod.coa}
\Delta_P(x\cdot g) = \Delta_P(x)\cdot \Delta_G(g),
\end{equation}
where the action on the right hand side is defined componentwise. Obviously $\eps_P(x\cdot g) = \eps_P(x)\eps_G(g)$. Thus, in a braided monoidal category $\cB$, $P$ is required to be a right $H$-module coalgebra with the $H$-action $\varrho: P\ot H \to P$. The compatibility \eqref{mod.coa} is then expressed in the element-free way and with the use of the braiding $\tau$ as the commutativity of the following diagram
$$
\xymatrix{P\otimes H \ar[r]^-\varrho \ar[d]_{\Delta_P\ot \Delta_H} & P\ar[r]^-{\Delta_P} & P\ot P \\
P\ot P\ot H\ot H \ar[rr]^-{\id_P\ot \tau_{P,H}\ot \id_H} && P\ot H\ot P\ot H \ar[u]_{\varrho\ot \varrho } ,}
$$
and $\eps_P\circ \varrho = \eps_P\ot \eps_H$, i.e.\ $\varrho$ is a morphism of coalgebras (comonoids) in $\cB$.

In the case of sets the canonical projection $p: P\times G \to P$, $(x,g)\mapsto x$ can be identified with the function $(x,g)\mapsto (x,*) \equiv x$ (where $*$ stands for the only element of the singleton set $*$), i.e.\ $p$ can be identified with $\id_P\times \eps_G$. In this form, the map $p$ can be translated to a braided monoidal category $\cB$.

These are  ingredients needed for the definition of a noncommutative principal bundle.

\begin{definition}\label{def.main}
Let $(\cB, \otimes, \1, \tau)$ be a braided monoidal category with equalisers preserved by the tensor product. Let $H$ be a Hopf algebra in $\cB$ and $B$ be a coalgebra in $\cB$.  A right $H$-module coalgebra $P$ (with action $\varrho: P\ot H\to P$) together with a coalgebra morphism $\pi: P\to B$ is called a {\em noncommutative $H$-principal bundle} provided that the following diagram
\begin{equation}\label{diag2}
\xymatrix{ P\otimes H
\ar@<0.5ex>[rr]^-{\id_P\ot \eps_H}\ar@<-0.5ex>[rr]_-{\varrho} && 
P \ar[r]^\pi &B ,}
\end{equation}
of comonoids is $\ot$-exact.
\end{definition}
The term `$\ot$-exact' is a translation of the term `exact' explained in Section~\ref{sec.clas} to braided monoidal categories. Therefore,  its  meaning comprises three statements:
\begin{alist}
\item  \eqref{diag2} is a coequaliser diagram,
\item \eqref{diag2} is a kernel pair diagram of comonoids in $\cB$,
\item  $\pi$ is an effective descent epimorphism of comonoids in $\cB$.
\end{alist}
The statement (A) is the same as (a) in Section~\ref{sec.clas}, the meaning of (B) and (C) need to be explained. In due course the need for and meaning of the assumptions that $\cB$ has equalisers and that they are preserved by the tensor product should become clear. 

As already mentioned in Section~\ref{sec.clas}, in the category of sets a pullback of $\alpha: E_1 \to B$ and $\beta: E_2 \to B$ is a subset of $E_1\times E_2$ defined by
$$
E_1\times_B E_2 = \{(x,y)\in E_1\times E_2\; |\; \alpha(x) = \beta(y)\}.
$$
Since all sets are comonoids, and functions are maps of comonoids, the set $E_1$ is a right $B$-comodule and $E_2$ is a left $B$-comodule with coactions
$$
\lambda_1 = (\id_{E_1}\times \alpha)\circ \Delta_{E_1} : E_1\to E_1\times B, \qquad x\mapsto (x,\alpha(x)),
$$
and 
$$
\lambda_2 = (\beta\times \id_{E_2})\circ \Delta_{E_2} : E_2\to B\times E_2, \qquad y\mapsto (\beta(y),y).
$$
Thus
$$
E_1\times_B E_2 
= \{(x,y)\in E_1\times E_2\; |\; (x,\alpha(x),y) = (x,\beta(y),y) \} = E_1\coten B E_2,
$$
where $E_1\coten B E_2$ denotes the equaliser of $\lambda_1\times \id_{E_2}$ and $\id_{E_1}\times \lambda_2$, i.e.\ the {\em cotensor product} of comodules. This indicates that pullbacks in a category with finite limits should be translated to cotensor products of comodules in a monoidal category. 

While the decision which of the monoids turn into left and which one in the right comodule is somewhat arbitrary, this ambiguity vanishes if we want to equalise the same morphism.  This leads to the following lemma and definition which explain (B).
\begin{lefinition}\label{lem.can}
Let $\cB$ be a monoidal category (not necessarily braided). Consider a diagram of comonoids
\begin{equation}\label{diag3}
\xymatrix{ C
\ar@<0.5ex>[r]^-{\alpha}\ar@<-0.5ex>[r]_-{\beta} & 
P \ar[r]^\pi &B ,}
\end{equation}
in which $\pi\circ\alpha = \pi\circ\beta$, i.e.\ $\pi$ coequalises $\alpha$ and $\beta$.
Assume that there exists the equaliser of $(\id_P\ot \pi \ot \id_P)\circ (\Delta_P\ot \id_P)$ and $(\id_P\ot \pi \ot \id_P)\circ (\id_P\ot \Delta_P)$, and denote it by  $P\coten B P$. Then there exists  a unique morphism
$$
\can: C \to P\coten B P,
$$
such that
$$
\xymatrix{C\ar[rr]^-\can \ar[rrd]_{(\alpha \ot \beta)\circ \Delta_C} &&  P\coten B P\ar[d]\\
&& P\ot P.}
$$
The diagram \eqref{diag3} is said to be a {\em kernel pair  diagram of comonoids in $\cB$} provided the map $\can$ is an isomorphism.
\end{lefinition}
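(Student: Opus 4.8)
The plan is to obtain $\can$ entirely from the universal property of the equaliser, so that existence and uniqueness come together in one stroke. Write
$$
f_1 = (\id_P\ot \pi \ot \id_P)\circ (\Delta_P\ot \id_P), \qquad f_2 = (\id_P\ot \pi \ot \id_P)\circ (\id_P\ot \Delta_P)
$$
for the two parallel morphisms $P\ot P\to P\ot B\ot P$ whose equaliser is $P\coten B P$, and let $j: P\coten B P\to P\ot P$ be the equalising inclusion (the unlabelled vertical arrow in the triangle). Since $j$ is universal among morphisms into $P\ot P$ that equalise $f_1$ and $f_2$, it suffices to check that $g:=(\alpha\ot\beta)\circ\DC: C\to P\ot P$ satisfies $f_1\circ g = f_2\circ g$. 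The desired factorisation $g = j\circ\can$ then produces $\can$, and its uniqueness is part of the universal property, so no separate uniqueness argument is needed.

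First I would rewrite $f_1\circ g$. Because $\alpha$ is a morphism of comonoids, $\Delta_P\circ\alpha = (\alpha\ot\alpha)\circ\DC$, whence $(\Delta_P\ot\id_P)\circ(\alpha\ot\beta)\circ\DC = (\alpha\ot\alpha\ot\beta)\circ(\DC\ot\id_C)\circ\DC$; composing with $\id_P\ot\pi\ot\id_P$ collapses the middle leg to $\pi\circ\alpha$, giving
$$
f_1\circ g = \big(\alpha\ot(\pi\circ\alpha)\ot\beta\big)\circ(\DC\ot\id_C)\circ\DC .
$$
Symmetrically, using that $\beta$ is a morphism of comonoids, $\Delta_P\circ\beta = (\beta\ot\beta)\circ\DC$, and then invoking coassociativity of $\DC$ to pass from $(\id_C\ot\DC)\circ\DC$ to $(\DC\ot\id_C)\circ\DC$, I would obtain
$$
f_2\circ g = \big(\alpha\ot(\pi\circ\beta)\ot\beta\big)\circ(\DC\ot\id_C)\circ\DC .
$$

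These two composites agree in every tensor leg except the middle one, which carries $\pi\circ\alpha$ on the one hand and $\pi\circ\beta$ on the other. By the standing hypothesis that $\pi$ coequalises $\alpha$ and $\beta$, i.e.\ $\pi\circ\alpha = \pi\circ\beta$, the two composites coincide, so $f_1\circ g = f_2\circ g$ and the universal property of the equaliser delivers the unique $\can: C\to P\coten B P$ with $j\circ\can = (\alpha\ot\beta)\circ\DC$, as required. I expect the only real work to lie in the bookkeeping of the two displays above: tracking which comonoid-morphism identity is applied to which tensor factor, and which of the two coassociated forms of the iterated comultiplication one lands in. There is no genuine obstacle here; in particular the braiding $\tau$ is never invoked, consistent with $\cB$ being merely monoidal.
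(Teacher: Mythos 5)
Your proposal is correct and follows essentially the same route as the paper: both verify that $(\alpha\ot\beta)\circ\Delta_C$ equalises the two parallel morphisms by combining the comonoid-morphism property of $\alpha$ and $\beta$, coassociativity of $\Delta_C$, and the hypothesis $\pi\circ\alpha=\pi\circ\beta$, and then invoke the universal property of the equaliser for existence and uniqueness of $\can$. The only difference is presentational: you rewrite both composites into a common normal form built on $(\Delta_C\ot\id_C)\circ\Delta_C$, whereas the paper runs a single chain of equalities from one composite to the other.
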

\begin{proof}
Start with the following  straightforward calculation:
\begin{eqnarray*}
(\id_P\ot \pi \ot \id_P)\circ (\Delta_P\ot \id_P)\circ (\alpha\ot \beta )\circ \Delta_C &= &(\id_P\ot \pi \ot \id_P)\circ  (\alpha\ot\alpha\ot\beta)\circ (\Delta_C\ot \id_C)\circ\Delta_C\\
&=& (\id_P\ot \pi \ot \id_P)\circ  (\alpha\ot\beta \ot \beta)\circ (\id_C\ot \Delta_C)\circ\Delta_C\\
&=& (\id_P\ot \pi \ot \id_P)\circ (\id_P\ot \Delta_P)\circ (\alpha\ot \beta)\circ \Delta_C .
\end{eqnarray*}
The first and the third equalities follow by the fact that $\alpha$ and $\beta$ are morphisms of comonoids, while the second equality is a consequence of the coassociativity of $\Delta_C$ and the fact that $\pi$ coequalises $\alpha$ and $\beta$. The existence and uniqueness of the morphism $\can$ now follow by the universal property of equalisers.
\end{proof}

\begin{lemma}\label{lem.can.H}
In the setting of Definition~\ref{def.main}, 
$\can: P\ot H \to P\coten B P$ is the unique morphism induced by the composite
\begin{equation}\label{eq.can}
\xymatrix{ P\ot H \ar[rr]^{\Delta_P\ot \id_H} && P\ot P\ot H \ar[rr]^-{\id_P \ot \varrho} && P\ot P.
}
\end{equation}
\end{lemma}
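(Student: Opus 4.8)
The plan is to reduce the claim to one diagrammatic identity by specialising Lemma \& Definition~\ref{lem.can}. Taking $C = P\ot H$, $\alpha = \id_P\ot\eps_H$ and $\beta = \varrho$, the coequalising hypothesis $\pi\circ\alpha = \pi\circ\beta$ required there is precisely (part of) condition (A) for the bundle \eqref{diag2}; hence that lemma applies and characterises $\can$ as the \emph{unique} morphism whose composite with the equaliser inclusion $P\coten B P\hookrightarrow P\ot P$ equals $(\alpha\ot\beta)\circ\Delta_{P\ot H} = \big((\id_P\ot\eps_H)\ot\varrho\big)\circ\Delta_{P\ot H}$. Because an equaliser inclusion is monic, the factorisation through $P\coten B P$ is unique, so it suffices to prove that the composite \eqref{eq.can} agrees, as a morphism $P\ot H\to P\ot P$, with $\big((\id_P\ot\eps_H)\ot\varrho\big)\circ\Delta_{P\ot H}$; the morphism induced by \eqref{eq.can} is then forced to coincide with $\can$.

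First I would expand $\Delta_{P\ot H}$ via the tensor-product comultiplication \eqref{eq.coalg} as $(\id_P\ot\tau_{P,H}\ot\id_H)\circ(\Delta_P\ot\Delta_H)$, and rewrite the outer map as $(\id_P\ot\eps_H)\ot\varrho = (\id_P\ot\varrho)\circ(\id_P\ot\eps_H\ot\id_P\ot\id_H)$, sliding $\varrho$ past the disjointly supported counit. This transfers the whole problem onto the middle block $(\id_P\ot\eps_H\ot\id_P\ot\id_H)\circ(\id_P\ot\tau_{P,H}\ot\id_H)$, which I claim equals $\id_{P\ot P}\ot\eps_H\ot\id_H$: applying $\eps_H$ to the braided-across $H$-leg after the braiding is the same as applying it before.

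The hard part is exactly this middle block, and it is the only place where the braiding genuinely intervenes. It follows from naturality of $\tau$ with respect to $f=\eps_H:H\to\1$ together with $\tau_{P,\1}=\id$ (up to unitors): the relevant naturality relation reads $\tau_{P,\1}\circ(\id_P\ot\eps_H) = (\eps_H\ot\id_P)\circ\tau_{P,H}$, whence $(\eps_H\ot\id_P)\circ\tau_{P,H} = \id_P\ot\eps_H$, which trivialises the braiding on the crossed leg. I expect this to be the main (if modest) obstacle precisely because in $\Set$ the braiding is the flip and the step is invisible, so the real content is checking that it survives in a general braided category. Once the middle block is replaced by $\id_{P\ot P}\ot\eps_H\ot\id_H$, the left-hand side becomes $(\id_P\ot\varrho)\circ(\id_{P\ot P}\ot\eps_H\ot\id_H)\circ(\Delta_P\ot\Delta_H)$, and the counit axiom $(\eps_H\ot\id_H)\circ\Delta_H = \id_H$ contracts the two $H$-legs to $(\id_P\ot\varrho)\circ(\Delta_P\ot\id_H)$, which is \eqref{eq.can}. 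This yields the required identity, and hence the lemma. It is worth noting that the module-coalgebra compatibility of $\varrho$ is never used; the identification of $\can$ with \eqref{eq.can} is purely formal, the coequalising property serving only to guarantee that $\can$ exists.
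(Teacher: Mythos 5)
Your proof is correct and follows essentially the same route as the paper's own: expand $\Delta_{P\ot H}$ via \eqref{eq.coalg}, use naturality of the braiding together with the triviality of $\tau_{P,\1}$ to slide $\eps_H$ across the braided leg, and contract with the counit axiom --- exactly the paper's three-line computation, with your appeal to Lemma~\ref{lem.can} and monicity of the equaliser inclusion supplying the (implicit) uniqueness part. One small caveat on your closing aside: the module-coalgebra compatibility of $\varrho$ is not entirely unused, since the proof of Lemma~\ref{lem.can} needs $\beta=\varrho$ to be a morphism of comonoids for the composite to factor through $P\coten B P$ in the first place; what is true is that the computation identifying that composite with \eqref{eq.can} does not use it.
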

\begin{proof} The form of the map $\can$ is obtained by the following computation 
\begin{eqnarray*}
(\id_P \ot \eps_H \ot \varrho)\circ\Delta_{P\ot H} &=& (\id_P \ot \eps_H \ot \varrho)\circ (\id_P\ot \tau_{P,H} \ot \id_H)\circ(\Delta_P\ot \Delta_H) \\
&=&  (\id_P \ot \varrho)\circ(\id_P \ot \id_P\ot \eps_H\ot \id_H) \circ(\Delta_P\ot \Delta_H) \\
&=& (\id_P \ot \varrho)\circ(\Delta_P\ot \id_H),
\end{eqnarray*}
where the second equality follows by the naturality of the braiding and from the fact that (up to unitors) the braiding $\tau_{P,\1}$ is the identity. 
\end{proof}

The property (B) thus states that the map $\can$ induced by \eqref{eq.can} is an isomorphism.

It remains to explain the meaning of (C), or, more precisely to translate (c) so that the meaning to (C) can be given. Take a set $A$. The comma category $(\Set \downarrow A)$ consists of functions $f:X\to A$. A function $f:X\to A$ can be equivalently described as a function assigning to $X$ the graph of $f$, i.e.\ as
$$
\graph{f}: X\to X\times A, \qquad x\mapsto (x, f(x)).
$$
If $A$ is understood as a comonoid (in a unique way), then $(X, \graph{f})$ is a right $A$-comodule ($\graph{f}$ is a coaction). The commutativity of the diagram \eqref{triang} for a morphism $g:(X_1, f_1)\to (X_2, f_2)$ in $(\Set \downarrow A)$ is equivalent to the commutativity of the following diagram for the induced coactions $\graph{f_1}$, $\graph{f_2}$
$$
\xymatrix{X_1 \ar[rr]^-g \ar[d]_{\graph{f_1}} && X_2\ar[d]^{\graph{f_2}}\\
X_1\times A \ar[rr]^-{g\times \id_A} && X_2\times A.}
$$
Therefore, the comma category $(\Set \downarrow A)$ is the same as or isomorphic to the category of right $A$-comodules.

In a monoidal category $\cB$, by the comma category of a comonoid $A$ we understand the category of right $A$-comodules $\Com A$. We should point out at this point that the choice of right over left $A$-comodules is arbitrary. We also do not require the objects of the comma category of $A$ to be (morphisms of) comonoids, as the notion of an effective descent morphism of comonoids can already be introduced in present generality.  

A morphism of comonoids $\pi: P\to B$ induces a functor $\pi_! : \Com P\to \Com B$ by composition with the coaction. If $(E,\lambda)$ is a right $P$-comodule, then
$$
\pi_! (E,\lambda) = (E, (\id_E\ot \pi)\circ \lambda),
$$
is a right $B$-comodule. On morphisms $\pi_!(h) =h$. In the case of sets this is exactly the functor $\pi_!$ between comma categories described in Section~\ref{sec.clas}. To see this one should use the identification of a function $f$ with the  function $\graph{f}$ into its graph. 

The construction of the functor $\pi^* :  \Com B\to \Com P$ requires additional assumptions on the monoidal category $\cB$. First  we need to look at the case of sets, and study the diagram \eqref{pullback}. The pullback in diagram \eqref{pullback} is 
\begin{eqnarray*}
E\times_B P &=& \{(e,x)\in E\times P \; |\; \pi(x) = f(e)\}\\
& = & \{(e,x)\in E\times P \; |\; (e,\pi(x),x) = (\graph{f}(e), x)\} = E\coten B P,
\end{eqnarray*}
where $E$ is the right $B$-comodule with coaction $\graph{f}$ and $P$ is the left $B$-comodule with coaction $(\pi\times \id_P)\circ \Delta_P$. The $P$-coaction induced by the projection $p_2$ is
$$
\graph{p_2} : E\times_B P \to E\times_B P\times P, \qquad (e,x)\mapsto (e,x,p_2(e,x)) = (e,x,x),
$$
that is  
$
\graph{p_2} =  \id_E\times \Delta_P\mid_{E\times_BP}.$ Translating this to the monoidal category $\cB$ we conclude that, for all $B$-comodules $(E,\lambda)$,
$$
\pi^*(E,\lambda) = (E\coten B P,  \id_E\coten B\Delta_P).
$$ 
For this functor to be defined we thus need to require the existence of equalisers in $\cB$ to obtain the object part of the comodule, and these equalisers need be preserved by the tensor product (meaning: after tensoring an equaliser we again obtain the equaliser) for $\id_E\coten B\Delta_P$ to be a well-defined coaction (this explains the origin of assumptions on $\cB$ made in Definition~\ref{def.main}). More explicitly, by $\id_E\coten B\Delta_P$ we understand the unique arrow in the following diagram
\begin{equation}\label{equaliser}
\xymatrix{ (E\coten B P) \otimes P \ar[r] & E\ot P\ot P 
\ar@<0.5ex>[rrrrr]^-{(\id_E \ot \pi \ot\id_P\ot \id_P)\circ(\id_E \ot \Delta_P\ot \id_P) }\ar@<-0.5ex>[rrrrr]_-{\lambda \ot \id_P\ot \id_P} &&&&& E\ot B\ot P\ot P\\ 
& E\ot P\ar[u]_{\id_E \ot \Delta_P} &&&&&\\
& E\coten B P\ar[u]\ar@{-->}[luu]^{\id_E\coten B\Delta_P}. &&&&& }
\end{equation}
The existence and uniqueness of $\id_E\coten B\Delta_P$ with specified codomain follow by the assumption that the tensor product preserves equalisers, so that the top row in diagram \eqref{equaliser} is an equaliser.  On morphisms of right $B$-comodules $f: (E_1,\lambda_1) \to (E_2,\lambda_2)$, $\pi^*(f) = f\coten B\id_P$ is the unique filler in the following equaliser diagram
\begin{equation}\label{eq.pi*}
\xymatrix{ E_2\coten B P  \ar[r] & E_2\ot P
\ar@<0.5ex>[rrrr]^-{(\id_{E_2} \ot \pi \ot\id_P)\circ(\id_{E_2} \ot \Delta_P) }\ar@<-0.5ex>[rrrr]_-{\lambda_2 \ot \id_P} &&&& E_2\ot B\ot P\\ 
& E_1\ot P\ar[u]_{f\ot \id_P} &&&&\\
& E_1\coten B P\ar[u]\ar@{-->}[luu]^{f\coten B\id_P}. &&&& }
\end{equation}
At this point the translation stops and the development of translated notions follows its own course.

If $\cB$ has equalisers and equalisers are preserved by the tensor product, then to any morphism of comonoids $\pi: P\to B$ one can associate a pair of functors 
$$
\pi_! : \Com P\to \Com B, \qquad \pi^*: \Com B\to \Com P.
$$
This is an adjoint pair with $\pi_!$ the left adjoint of $\pi^*$. For any $P$-comodule $(E,\lambda)$, the unit of the adjunction is
$$
\eta_{(E,\lambda)}: (E,\lambda) \to \pi^*\circ\pi_!(E,\lambda)= (E\coten B P, \id_E\coten B \Delta_P), \qquad \eta_{(E,\lambda)}= \hat{\lambda},
$$
where  $E$ on the right hand side of the first equality is understood as a right $B$-comodule with the coaction  $(\id_E\ot \pi)\circ \lambda$, and $\hat{\lambda}$ is the (universally induced) unique filler in the following diagram
$$
\xymatrix{ E\coten B P \ar[r] & E\ot P 
\ar@<0.5ex>[rrrr]^-{(\id_E \ot \pi \ot\id_P)\circ(\id_E \ot \Delta_P) }\ar@<-0.5ex>[rrrr]_-{(\id_E \ot \pi \ot\id_P)\circ(\lambda \ot \id_P)} &&&& E\ot B\ot P\\ 
& E\ar[u]^\lambda\ar@{-->}[lu]^{\hat{\lambda}}. &&&& }
$$
For any $B$-comodule $(E,\lambda)$, the counit of the adjunction 
$$
\epsilon_{(E,\lambda)}: \pi_!\circ\pi^*(E,\lambda) = \left(E\coten BP, \id_E\coten B\left(\left(\id_P\ot \pi\right)\circ \Delta_P\right)\right) \to (E,\lambda), 
$$
is  the composite 
$$
 \epsilon_{(E,\lambda)} : \xymatrix{E\coten B P\ar[r] & E\ot P \ar[rr]^-{\id_E \ot  \eps_P} && E.}
$$
The morphism $\id_E\coten B\left(\left(\id_P\ot \pi\right)\circ \Delta_P\right)$ is defined by a diagram similar to \eqref{equaliser}. Since $\pi:P\to B$ is a morphism of comonoids, $\eps_B\circ \pi = \eps_P$, and $E\coten B B$ is isomorphic to $E$ (with the isomorphism induced by $\id_E\ot \eps_B$),  the counit of adjunction $ \epsilon_{(E,\lambda)}$ can be identified with $\id_E\coten B \pi$. 

The monad $(T = \pi^*\circ\pi_!,\mu,\eta)$ on the category $\Com P$ induced by the adjunction $\pi_!\dashv \pi^*$ according to the standard procedure described in Section~\ref{sec.clas} comes out as follows. For any right $P$-comodule $(E,\lambda)$,
\begin{equation}\label{T}
T(E,\lambda) = (E\coten B P, \id_E\coten B \Delta_P), \qquad \eta_{(E,\lambda)} = \hat{\lambda},
\end{equation}
and 
\begin{equation}\label{mu}
\mu_{(E,\lambda)} =  \epsilon_{(E,\lambda)} \coten B \id_P:  \xymatrix{E\coten B P\coten B P \ar[r] & (E\ot P)\coten BP \ar[rrr]^-{(\id_E \ot  \eps_P)\coten B \id_P} &&& E\coten B P,}
\end{equation}
where $E$ is understood as a right $B$-comodule by the coaction acquired from $\pi_!$, i.e., $(\id_E\ot \pi)\circ \lambda$. 

An algebra or module over the monad $(T,\mu,\eta)$ is a triple $(E,\lambda,\xi)$, in which $(E,\lambda)$ is a right $P$-comodule (i.e.\ an object in the category $\Com P$ on which $T$ operates) and $\xi: E\coten B P \to E$ is a morphism of $P$-comodules rendering commutative the following diagrams
\begin{equation}\label{alg}
\xymatrix{ E\coten B P \coten BP\ar[rr]^-{\mu_{(E,\lambda)}} \ar[d]_{\xi \coten B \id_P} && E\coten B P \ar[d]^\xi \\
E\coten B P \ar[rr]^-\xi && E,}\qquad 
\xymatrix{E  \ar[rr]^-{\hat{\lambda}}\ar[rd]_{\id_E} && E\coten B P\ar[ld]^\xi \\
& E. &}
\end{equation}
As in Section~\ref{sec.clas} (or, indeed, as in the case of any adjoint pair of  functors), associated to the comonad morphism $\pi: P\to B$ there is a comparison functor $K$ connecting the category of $B$ comodules with the category  $ \Com P^T$ of algebras over the monad $T=\pi^*\circ\pi_!$. $K$ fits into the commutative triangle
$$
\xymatrix{ \Com B\ar[rr]^{K}\ar[dr]_{\pi^*} & & \Com P^T\ar[dl]^{U} \\
& \Com P.& }
$$
Explicitly, for any $B$-comodule $(E,\lambda)$,
\begin{equation} \label{comparison}
K(E,\lambda) = \left( E\coten B P ,\id_E\coten B\Delta_P , \epsilon_{(E,\lambda)}\coten B \id_P\right).
\end{equation}
An epimorphism of comonoids $\pi: P\to B$ in the monoidal category $\cB$ (with equalisers preserved by the tensor product) is called an {\em effective descent epimorphism of comonoids in $\cB$} provided the  comparison functor $K: \Com B \to \Com P^{\pi^*\circ\pi_!}$ is an equivalence of categories. With this last definition full contents of Definition~\ref{def.main} is explained. Note that the unit and counit of adjunction $\pi_! \dashv \pi^*$, the associated monad and comparison functor are translations of structures described in Section~\ref{sec.clas}. This might be interpreted as passing of a consistency check and as confirmation of faithfulness of the translation procedure employed here.

\section{Identifying principal comodule algebras}\label{sec.pcalg}
\setcounter{equation}{0}
In this section we would like to work out what noncommutative $H$-principal bundles are in the braided (symmetric) monoidal category opposite to the category of vector spaces. Fix a field $\k$.  As explained in Section~\ref{sec.quant}, the category of $\k$-vector spaces $\vect_\k$ is a monoidal category with the usual tensor product as the monoidal product and with the field $\k$ as the monoidal unit. The flip $V\ot W\ni v\ot w\mapsto w\ot v\in W\ot V$ is a braiding (in fact symmetry as it squares to identity) in $\vect_\k$.  The category of vector spaces has all equalisers and coequalisers and they are preserved by the tensor product.

We will study noncommutative principal bundles in the category $\vect_\k^{op}$, {\bf opposite} to $\k$-vector spaces. $\vect_\k^{op}$ has $\k$-vector spaces as  objects and linear transformations as morphisms, but a morphism $f: V\to W$ in $\vect_\k^{op}$ is given by a $\k$-linear transformation $f: W\to V$. Consequently, the composition in  $\vect_\k^{op}$ is given by the opposite composition of linear transformations. This `reversing of arrows' results in `swapping the prefix co-'. We now carefully study the contents of Definition~\ref{def.main} in case $\cB = \vect_\k^{op}$.

A Hopf algebra $H$ in $\vect_\k^{op}$ is also a Hopf algebra in $\vect_\k$, and vice versa (what was the multiplication in one case becomes the comultiplication in the other and vice versa); i.e.\ in both cases $H$ is a standard Hopf algebra over $\k$. A coalgebra $B$ in $\vect_\k^{op}$ the same as a $\k$-algebra $B$. A right $H$-module coalgebra $P$ in $\vect_\k^{op}$ is the same as a right $H$-comodule $\k$-algebra $P$ with coaction $\varrho: P\to P\ot H$ in $\vect_\k$. The diagram \eqref{diag2} of coalgebras in $\vect_\k^{op}$, after inverting all the arrows, becomes the following diagram of $\k$-algebras
\begin{equation}\label{diag4}
\xymatrix{ B\ar[r]^\pi & P
\ar@<0.5ex>[rr]^-{\id_P\ot 1_H}\ar@<-0.5ex>[rr]_-{\varrho} && 
P\ot H ,}
\end{equation}
where $1_H : \k\to H$ is the unit (map) of $H$.
The $\ot$-exactness of \eqref{diag4} means that 
\begin{alist}
\item  \eqref{diag4} is an equaliser diagram of $\k$-linear transformations,
\item \eqref{diag4} is a cokernel pair diagram of $\k$-algebras,
\item  $\pi$ is an effective descent monomorphism of $\k$-algebras.
\end{alist}
Since the equaliser of linear transformations is the same as the kernel of their difference, the condition (A) means that $B$ can be identified with the subalgebra  of $H$-coaction invariants of $P$,
$$
B\cong P^{coH} := \{ x\in P\; |\; \varrho(x) = x\ot 1_H\},
$$
and $\pi$ is the inclusion map. The cotensor product of comodules in $\vect_\k^{op}$ is the same as the tensor product of modules over $\k$-algebras. Thus the statement (B) means that the unique $\k$-linear map $\can : P\ot_B P \to P\ot H$ fitting the following diagram
$$
\xymatrix{ P\ot_B P \ar[rrrr]^-\can &&&& P\ot H \\
P\ot P \ar[u] \ar[rrrru]_{~\hspace{8mm}m_{P\ot H}\circ (\id_P\ot 1_H\ot \varrho)}, &&&& }
$$
is an isomorphism. Here $m_{P\ot H}$ denotes the multiplication in the tensor product algebra, that is, for all $x, y\in P$ and $g,h\in H$, 
$$
m_{P\ot H} (x\ot g \ot y \ot h) = xy\ot gh;
$$
see \eqref{eq.alg}. The map $\can$ can be easily computed (compare the proof of Lemma~\ref{lem.can.H}),
$$
\can = (m_P \ot \id_H )\circ (\id_P \ot_B \varrho), \qquad x\ot_B y\mapsto x\varrho(y).
$$
Therefore, conditions (A) and (B) mean that $P$ is a {\em Hopf-Galois $H$-extension of $B$}. 

The map of $\k$-algebras $\pi: B\to P$ induces a pair of functors between the categories of their right modules $\Mod B$ and $\Mod P$ (these are the categories of right comodules over the comonoids $B$ and $P$ in $\vect_\k^{op}$). The functor 
$
\pi_! : \Mod P \to \Mod B,
$
is the restriction of scalars functor, which views every right $P$-module $E$ as a right $B$-module via the map $\pi$, i.e., for all $a\in B$ and $e\in E$, $e\cdot a := e\cdot \pi(a)$. Equivalently, $\pi_!$ can be described as the homorphism functor $\rhom P{{}_BP} -$ (and in the algebraic geometry literature often denoted by $\pi_*$ as it is the {\em direct image functor}). The functor $\pi^* : \Mod B\to \Mod P$ is the extension of scalars functor, for any right $B$-module $X$, $\pi^*(V) = V\ot_B P$, with the action of $P$ induced by the multiplication in $P$, i.e.\ $(v\ot_B x)\cdot y := v\ot_B xy$, for all $x,y\in P$ and $v\in V$. Equivalently, $\pi^*$ can be described as the tensor functor $-\ot_BP$ (and in the algebraic geometry literature it is often referred to as the {\em inverse image functor}). 

By reversing the arrows in the underlying monoidal category, we reverse adjunctions. Therefore, when viewed from the point of view of vector spaces, the functor $\pi^*$ is the {\bf left} adjoint of $\pi_!=\pi_*$. The formula for the unit of adjunction written in Section~\ref{sec.quant} gives the counit of adjunction $\pi^* \dashv \pi_*$ and vice versa. The composite $T= \pi^*\circ \pi_*$ is a {\bf comonad} on the category $\Mod P$. For all right $P$-modules $E$, $T(E) = E\ot_BP$ with $P$-action $\id_E\ot_B m_P$. The comultiplication and counit of $T$ come out as
$$
E\ot_B P \to E\ot_BP\ot_BP, \qquad e\ot_B x\mapsto e\ot_B 1_P\ot_B x,
$$
and
$$
E\ot_B P \to E, \qquad e\ot_B x\mapsto e\cdot x. 
$$

A {\em comodule} (or {\em coalgebra} in category theory terminology) over the comonad $T$, consists of a right $P$-module $E$ (i.e.\ an object of the category $\Mod P$ on which $T$ operates; the $P$-action is not written explicitly) together with a right $P$-linear map $\xi: T(E) = E\to E\ot_B P$ which satisfies conditions obtained by reversing all the arrows   in diagrams \eqref{alg}  and replacing cotensor product by the tensor product. On elements $e\in E$ and writing $\xi(e) = \sum_i e_i\ot_B x_i \in E\ot_B P$ the conditions satisfied by $\xi$ are
\begin{equation}\label{des}
\sum_i \xi(e_i) \ot_B x_i = \sum_i e_i\ot_B1_P\ot_B  x_i , \qquad \sum_i e_i\cdot x_i =e.
\end{equation}
Pairs $(E,\xi)$ with $\xi$ satisfying conditions \eqref{des} are known as {\em descent data} and form the backbone of descent theory of (noncommutative) algebras; see the classic text \cite{Gro:techI} or  modern elegant expositions \cite[Section~4.7]{Bor:han2}, \cite{Nus:non}. The category of descent data corresponding to the $\k$-algebra map $\pi$ is denoted by $\Desc_\pi$. We have just shown that the category of comodules over $T$ can be identified with  $\Desc_\pi$, and therefore there is the following triangle of categories of functors
$$
\xymatrix{ \Mod B\ar[rr]^{K}\ar[dr]_{\pi^*} & & \Desc_\pi \ar[dl]^{U} \\
& \Mod P .& }
$$
For all right $B$-modules $V$, the comparison functor $K$ returns the descent datum $(V\ot_B P, \xi)$, where $\xi: V\ot_B P\to V\ot_B P\ot_B P$, $v\ot_B x\mapsto v\ot_B 1_P\ot_B x$. The functor $K$ is an equivalence if and only if the algebraic descent associated to $\pi$ is effective, which, by the Grothendieck theorem (extended in \cite{Cip:dis}) is equivalent to the statement that $P$ is {\em faithfully flat} as a left $B$-module. (Recall that, by definition, $P$ is faithfully flat as a left $B$-module if the sequence of right $B$-module maps
$
\xymatrix{V\ar[r]^-f &  V' \ar[r]^-g & V''}
$
is exact if and only if the sequence
$
\xymatrix{V\ot_B P\ar[rr]^-{f\ot_B\id_P} &&  V'\ot_B P \ar[rr]^-{g\ot_B\id_P} && V''\ot_B P}
$
is exact.) 

Therefore, a noncommutative $H$-principal bundle in $\vect_\k^{op}$ is the same as a Hopf-Galois $H$-extension $B\subseteq P$ such that $P$ is faithfully flat as a left $B$-module. If $H$ has a bijective antipode, then  by \cite[Theorems~4.8~\&~5.6]{SchSch:gen}, $P$ is an {\em $H$-equivariantly projective} left $B$-module, that is the restriction of the multiplication map $B\ot P\to P$ has a left $B$-module right $H$-comodule section (splitting). An $H$-equivariantly projective Hopf-Galois extension is termed a {\em principal $H$-comodule algebra} in \cite{HajKra:pic}. A strong evidence that principal comodule algebras should be understood as principal bundles in noncommutative geometry  is being uncovered in \cite{BauHaj:pet}. 
We have just arrived at the following

\begin{ident}
Let $H$ be a Hopf algebra (over a field $\k$) with bijective antipode. Then principal $H$-comodule algebras can be identified with noncommutative $H$-principal bundles in the braided monoidal category $\vect_\k^{op}$.
\end{ident}

With this the main aim of these notes is achieved.

\section{Further directions}\label{sec.dir}
\setcounter{equation}{0}
In this final section we comment on some aspects of Definition~\ref{def.main} in a general braided monoidal category $\cB$ (with equalisers preserved by the tensor product). Slightly greater familiarity with category theory than in the rest of this article is required here. The discussion of internal categories in \cite[Chapter~8]{Bor:han2} or \cite[Chapter~2]{Joh:top} and of Beck's theorem in \cite[Section~3.3]{BarWel:top} might be of some assistance. 

We consider first a monoidal category $\cB$ (not necessarily braided) with coequalisers preserved by the tensor product).  Take a morphism of comonoids $\pi: P\to B$ and consider the associated monad $T$ defined by equations \eqref{T}--\eqref{mu} and the comparison functor $K: \Com B \to \Com P^T$  defined in equation \eqref{comparison}. For any right $P$-comodule $(E,\lambda)$, there is an isomorphism (of $P$-comodules) $E\to E\coten P P$ induced from the coaction $\lambda: E\to E\ot P$. Taking this isomorphism into account, $T$ can be identified with the functor
$$
\hat{T}:= -\coten P P\coten B P : E\mapsto E\coten P P\coten B P.
$$
 Again taking the above isomorphism as a right unitor and a similar isomorphism as a left unitor, the category $\Bicom P$ of $P$-bicomodules is a monoidal category with the cotensor product $\coten P$ as its monoidal product. The fact that $\hat{T}$ is a monad means that $P\coten B P$ is a monoid in $\Bicom P$. The multiplication of this monoid is 
 $$
 \id_P\coten B \pi_B \coten B \id_P: P\coten B P\coten B P \cong P\coten B P\coten P P\coten B P \to P\coten B B\coten B P \cong P\coten B P,
 $$
 and the unit is induced from $\Delta_P$. The category $\Com P^T$ is isomorphic to the category of right modules over $P\coten B P$.  As explained in \cite{Agu:int}, $(P, P\coten BP)$ might be interpreted as an {\em internal category} in the monoidal category $\cB$ ($P$ is the object of objects and $P\coten B P$ is the object of morphisms). With this interpretation in mind, $\Com P^T$ is isomorphic to  the category  $\Pr{P, P\coten BP}$ of {\em internal presheaves} on $(P, P\coten BP)$; see \cite[Chapter~6]{Agu:int} and \cite{Vaz:phd}. As $\Com B$ is the category of right modules over the trivial monoid in  $\Bicom B$, the functor $K$ compares the categories of presheaves over two internal categories in $\cB$.

The necessary and sufficient conditions for $K$ to be an equivalence are provided by the Beck Precise Monadicity  Theorem \cite{Bec:tri}. In particular, if $K$ is an equivalence, then its inverse functor $K^{-1}$ is defined as follows. For all $(E,\lambda,\xi)$ in $\Com P^T$, $K^{-1}(E,\lambda,\xi) = E^P$,
where $E^P$ is defined by the coequaliser
\begin{equation}\label{Beck.coeq}
\xymatrix{ E\coten B P 
\ar@<0.5ex>[rr]^-{\xi}\ar@<-0.5ex>[rr]_-{\epsilon_{(E,\lambda)}} && 
E \ar[rr]^-{\Pi_E} && E^P.}
\end{equation}
The existence of coequaliser \eqref{Beck.coeq} is guaranteed by the fact that $K$ is an equivalence. The natural isomorphism $\Phi: \id_{\Com P^T} \to K\circ K^{-1}$ is the composite, for all $(E,\lambda,\xi)$ in $\Com P^T$,
$$
\Phi_{(E,\lambda,\xi)} : \xymatrix{ E\ar[rr]^-{\hat{\lambda}} && E\coten B P \ar[rr]^{\Pi_E\coten B \id_P} && E^P\coten B P.}
$$
The natural isomorphism $\Psi: K^{-1}\circ K\to \id_{\Com B} $, for any right $B$-comodule $(E,\lambda)$, is the unique filler in the following diagram
$$
\xymatrix{E\coten B P\coten B P \ar@<0.5ex>[rrrr]^-{\epsilon_{(E,\lambda)}\coten B \id_P}\ar@<-0.5ex>[rrrr]_-{\epsilon_{(E\coten BP,\id_E\coten B((\id_E\ot \pi)\circ \Delta_P))}} && &&
E \coten B P \ar[rr]^-{\Pi_{E\coten B P} }\ar[drr]_{\epsilon_{(E,\lambda)}}&& (E\coten B P)^P \ar@{-->}[d]^{\Psi_{(E,\lambda)}} \\
&&&&&& E.}
$$

In the context of Definition~\ref{def.main} the category $\Com P^T$ and the functors $K$, $K^{-1}$ can be described in purely Hopf-algebraic terms.

\begin{proposition}  \label{prop.invariants}
Let $\pi: P\to B$ be a non-commutative principal $H$-bundle in a braided monoidal category $(\cB,\ot,\1,\tau)$ as in Definition~\ref{def.main}. Then:
\begin{zlist}
\item The morphism $\can: P\ot H \to P\coten B P$ described in Lemma~\ref{lem.can.H} is a left $P$-colinear and right $H$-linear map.
\item  The functor $T$ is naturally isomorphic to the functor which sends right $P$-comodule $(E,\lambda)$ to the right $P$-comodule $(E\ot H,\nu)$, where 
$$
\nu = (\id_E\ot \id_H\ot \varrho)\circ (\id_E\ot \tau_{P,H}\ot \id_H)\circ (\lambda \ot \Delta_H).
$$
Upon this isomorphism the resulting functor is a monad with multiplication $\id_E\ot m_H$ and unit $\id_E\ot 1_H$. 
\item The category $\Com P^T$ is isomorphic to the category of relative $[P,H]$-Hopf modules, $\Mod{P,H}$. The objects of  $\Mod{P,H}$ are triples $(E,\lambda ,\zeta)$, where $(E,\lambda)$ is a right $P$-comodule and $(E,\zeta)$ is a right $H$-module, such that
$$
\lambda\circ\zeta = (\zeta\ot \varrho)\circ (\id_E\ot \tau_{P,H} \ot \id_H)\circ (\lambda\ot \Delta_H).
$$
\item The comparison functor $K$ is naturally isomorphic to the functor which sends every right $B$-comodule $(E,\lambda)$ to the relative $[P,H]$-Hopf module 
$$
(E\coten BP, \id_E\coten B \Delta_P, \id_E\coten B \varrho).
$$
\item The functor $K^{-1}$ is naturally isomorphic to the {\em $H$-invariants functor}, which sends $(E,\lambda,\zeta)$ to the $B$-comodule $(E^H, \lambda^H)$, where $E^H$ is the coequaliser of $\zeta$ and $\id_E\ot \eps_H$, and $\lambda^H$ is induced from the composite
$$
\xymatrix{ E \ar[r]^-\lambda & E\ot P \ar[rr]^-{\id_E\ot \pi} && E\ot B \ar[r] & E^H\ot B.}
$$
\end{zlist}
\end{proposition}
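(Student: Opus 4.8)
The plan is to prove all five parts by transporting the monad $T$ of equations \eqref{T}--\eqref{mu} across a single natural isomorphism $E\coten B P\cong E\ot H$ manufactured out of the canonical map $\can$. Once this isomorphism and its compatibilities are established, parts (3)--(5) reduce to unwinding, respectively, the definition of a monad algebra, the explicit value \eqref{comparison} of the comparison functor, and the Beck coequaliser \eqref{Beck.coeq}. So the weight of the argument sits in parts (1) and (2).

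For part (1) I would argue by direct diagram chase. By Lemma~\ref{lem.can.H} the map $\can$ is induced by $(\id_P\ot\varrho)\circ(\Delta_P\ot\id_H)$. Left $P$-colinearity is the compatibility of $\can$ with the left coaction $\Delta_P\ot\id_H$ on $P\ot H$ and the coaction induced by $\Delta_P$ on the left tensorand of $P\coten B P$; this is immediate from coassociativity of $\Delta_P$. Right $H$-linearity is the compatibility of $\can$ with $\id_P\ot m_H$ on $P\ot H$ and with the action $\id_P\coten B\varrho$ on the right tensorand of $P\coten B P$; here one first notes that $\varrho$ descends to an action on $P\coten B P$ because it is left $B$-colinear (a consequence of $P$ being a module coalgebra together with the coequalising identity $\pi\circ\varrho=\pi\circ(\id_P\ot\eps_H)$ built into \eqref{diag2}), and then the claim follows from the right $H$-module axiom $\varrho\circ(\varrho\ot\id_H)=\varrho\circ(\id_P\ot m_H)$.

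Part (2) is the heart of the matter. Using the canonical isomorphism $E\cong E\coten P P$ (with inverse $\id_E\ot\eps_P$), the associativity of the cotensor product---legitimate precisely because $\ot$ preserves equalisers---and the isomorphism $\can$ of condition (B) rendered colinear and linear by part (1), I would assemble the chain
\begin{equation*}
E\coten B P\;\cong\; E\coten P(P\coten B P)\;\xrightarrow{\ \id_E\coten P\can^{-1}\ }\;E\coten P(P\ot H)\;\cong\;(E\coten P P)\ot H\;\cong\;E\ot H ,
\end{equation*}
where the middle arrow requires the left $P$-colinearity of $\can$ and the penultimate isomorphism again requires $\ot$ to preserve the defining equaliser. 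Chasing the right $P$-coaction $\id_E\coten B\Delta_P$ through this chain produces the stated coaction $\nu$, while the right $H$-linearity of $\can$ is exactly what turns the monad multiplication $\epsilon_{(E,\lambda)}\coten B\id_P$ of \eqref{mu} into $\id_E\ot m_H$ and the unit $\hat\lambda$ of \eqref{T} into $\id_E\ot 1_H$. I expect this to be the main obstacle: not any single identity, but the bookkeeping required to check that one and the same isomorphism simultaneously intertwines the comodule structure, the monad multiplication, and the monad unit, all under the standing hypothesis that equalisers are preserved by $\ot$.

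With part (2) secured, the rest is unwinding. For (3): an algebra over the transported monad $(-\ot H,\ \id\ot m_H,\ \id\ot 1_H)$ is a right $P$-comodule $(E,\lambda)$ equipped with a structure morphism $\xi=\zeta\colon E\ot H\to E$ which by the monad axioms is exactly a right $H$-action, and whose being a morphism in $\Com P$ is exactly the relative Hopf-module compatibility $\lambda\circ\zeta=(\zeta\ot\varrho)\circ(\id_E\ot\tau_{P,H}\ot\id_H)\circ(\lambda\ot\Delta_H)$; this identifies $\Com P^T$ with $\Mod{P,H}$. For (4): transport the value \eqref{comparison} of $K$ across the isomorphism of (2); the structure map $\epsilon_{(E,\lambda)}\coten B\id_P$ then corresponds to the $H$-action on the right tensorand, namely $\id_E\coten B\varrho$, yielding the stated relative Hopf module. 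For (5): transport the Beck coequaliser \eqref{Beck.coeq}; under $E\coten B P\cong E\ot H$ the structure map $\xi$ becomes $\zeta$, while $\epsilon_{(E,\lambda)}$ becomes $\id_E\ot\eps_H$ (one uses $\eps_P\circ\varrho=\eps_P\ot\eps_H$ to collapse the second tensorand). Hence $E^P$ is the coequaliser of $\zeta$ and $\id_E\ot\eps_H$, i.e.\ $E^H$, with the induced $B$-coaction $\lambda^H$ exactly as described.
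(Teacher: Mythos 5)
Your proposal is correct and follows essentially the same route as the paper: establish the left $P$-colinearity and right $H$-linearity of $\can$ by a diagram chase through the induced structures on $P\coten B P$ (the paper makes explicit the step of composing with the equaliser monomorphism $\id_P\ot\iota$ and cancelling, which your ``direct diagram chase'' implicitly contains), and then transport everything through the identical chain $E\coten B P\cong E\coten P P\coten B P\xrightarrow{\id_E\coten P\can^{-1}}E\coten P P\ot H\cong E\ot H$. Your unwinding of parts (3)--(5) is in fact more detailed than the paper, which simply states that the remaining claims follow by translation through this natural isomorphism.
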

\begin{proof}
The morphism \eqref{eq.can} is both left $P$-colinear and right $H$-linear, where  $P\ot H$ (resp.\ $P\ot P$)  is a $P$-comodule by $\Delta_P\ot \id_H$ (resp.\  $\Delta_P\ot \id_P$) and $H$-module  by $\id_P \ot m_H$ (resp.\ $\id_P \ot \varrho$).  Let $\iota : P\coten B P \to P\ot P$ be the equaliser monomorphism, so that  
$$
\iota \circ \can = (\id_P \ot \varrho)\circ (\Delta_P\ot \id_H);
$$
see equation~\eqref{eq.can}. The preservation of equalisers by tensor product allows one to make $P\coten BP$ into a left $P$-comodule with coaction $\Delta_P\coten B \id_P$ defined as the unique filler in the diagram
 $$
\xymatrix{ P\ot (P\coten B P)  \ar[rr]^-{\id_P\ot \iota} && P\ot P\ot P 
\ar@<0.5ex>[rr] \ar@<-0.5ex>[rr] && P\ot P\ot B\ot P\\ 
&& P\coten B P\ar[u]_{(\Delta_P\ot \id_P)\circ\iota}\ar@{-->}[llu]^{\Delta_P\coten B \id_P}; && }
$$
compare diagram \eqref{equaliser}. By construction, $\iota$ is a $P$-colinear map. Similarly, he right $H$-action on $P\coten B P$ is defined as $\id_P\coten B \varrho$, 
$$
\xymatrix{ P\coten B P  \ar[rr]^-{ \iota} && P\ot P
\ar@<0.5ex>[rrrr]^-{(\id_P\ot \pi\ot \id_P)\circ(\Delta_P\ot \id_P)} \ar@<-0.5ex>[rrrr]_-{(\id_P\ot \pi\ot \id_P)\circ(\id_P\ot \Delta_P)} &&&&  P\ot B\ot P\\ 
&& P\coten B P\ot H\ar[u]_{(\id_P\ot \varrho)\circ(\iota\ot H)}\ar@{-->}[llu]^{\id_P\coten B \varrho}; &&& }
$$
compare diagram \eqref{eq.pi*}. Again by construction $\iota$ is an $H$-linear map. 
 Therefore,
$$
( \id_P \ot \iota)\circ ( \id_P\ot \can)\circ (\Delta_P\ot \id_H) = (\Delta_P \ot \id_P)\circ \iota \circ \can 
= (\id_P\ot \iota)\circ  (\Delta_P\coten B \id_P)\circ \can.
$$
The first equality expresses the $P$-colinearity of morphism \eqref{eq.can}, while the second is the $P$-colinearity of $\iota$. Since the tensor product preserves equalisers,
$(\id_P \ot \iota) : (P\ot P)\coten B P\cong P\ot (P\coten B P)\to P\ot P\ot P$ is the equaliser monomorphism. The (left) cancellation property of monomorphisms now yields
$$
( \id_P\ot \can)\circ (\Delta_P\ot \id_H) =(\Delta_P\coten B \id_P)\circ \can,
$$
i.e.\ $\can$ is a left $P$-colinear morphism as required. The $H$-linearity of $\can$ is proven is a similar way. This proves statement (1).

Since $\can$ is an isomorphism of left $P$-comodules and right $H$-modules,  there is a chain of isomorphisms
$$
\xymatrix{E\coten B P\ar[r]^-\cong & E\coten PP\coten B P\ar[rr]^-{\id_E\coten P\can^{-1}} && E\coten P P\ot H \ar[r]^-\cong & E\ot H.}
$$
One uses this composite isomorphism, which is natural in $E$, to form the required isomorphisms of functors and  the required equivalence of categories. All the remaining statements are obtained by translation through this isomorphism. 
\end{proof}

Once this interpertation of categories and functors is made, we find ourselves in the realm of Hopf-Galois theory in braided monoidal categories as developed for example in \cite{Sch:bra}. Making arguments dual to  these in \cite[Section~4]{Sch:bra} one associates to a noncommutative principal $H$-bundle $\pi: P\to B$ a {\em quantum category} in the sense of \cite[Section~12]{DayStr:cat} (or a braided version of a {\em bicoalgebroid} in terminology of \cite[Section~5]{BrzMil:bia}) as follows. $B\cong P^H$ is the object of objects and $G= (P\ot P)^H$ is the object of morphisms, where $P\ot P$ is a $[P,H]$-Hopf module with coaction $\id_P\ot \Delta_P$ and the diagonal right $H$-action. The source and target maps are induced from $\pi\ot \eps_P$ and $\eps_P\ot \pi$. The multiplication (or composition of morphisms) $m_G: G\coten BG \to G$ is induced from the composite
$$
\Pi_{P\ot P}\circ (\varrho\ot \id_P)\circ (\id_P\ot\eps_P\ot \id_H\ot \id_P)\circ (\id_P\ot\can^{-1}\ot \id_P),
 $$
 while the unit $u_G: B\to G$ is induced from $\Pi_{P\ot P}\circ\Delta_P$. The coalgebra structure of $G$ is induced from the tensor product coalgebra $P\ot P^{op}$, where $P^{op}$ is the opposite coalgebra to $P$, that is the object $P$ of $\cB$ with comultiplication $\tau_{P,P}\circ \Delta_P$ and counit $\eps_P$.

 \end{document}